\numberwithin{equation}{section}
\numberwithin{figure}{section}
\theoremstyle{plain}
\newtheorem{thm}{\protect\theoremname}[section]
  \theoremstyle{definition}
  \newtheorem{defn}[thm]{\protect\definitionname}
  \theoremstyle{remark}
  \newtheorem{rem}[thm]{\protect\remarkname}
  \theoremstyle{plain}
  \newtheorem{prop}[thm]{\protect\propositionname}
\newcommand{\noun}[1]{\textsc{#1}}
  \providecommand{\definitionname}{Definition}
  \providecommand{\propositionname}{Proposition}
  \providecommand{\remarkname}{Remark}
\providecommand{\theoremname}{Theorem}
\begin{document}

\lhead{Poisson Centralizer of the Trace}

\rhead{ Szabolcs M\'esz\'aros}

\title{Poisson Centralizer of the Trace}

\author{Szabolcs M\'esz\'aros}
\begin{abstract}
The Poisson centralizer of the trace element $\sum_{i}x_{i,i}$ is
determined in the coordinate ring of $SL_{n}$ endowed with the Poisson
structure obtained as the semiclassical limit of its quantized coordinate
ring. It turns out that this maximal Poisson-commutative subalgebra
coincides with the subalgebra of invariants with respect to the adjoint
action.
\thanks{2010\emph{ Mathematics Subject Classification.} 16T20, 17B63 (primary),
16W70, 20G42 (secondary).}
\thanks{\emph{Keywords.} Quantized coordinate ring, semiclassical limit, Poisson
algebra, complete involutive system, maximal Poisson-commutative subalgebra.}
\end{abstract}

\maketitle

\section{Introduction}

The semiclassical limit Poisson structure on $\mathcal{O}(SL_{n})$
received considerable attention recently because of the connection
between the primitive ideals of the quantized coordinate ring $\mathcal{O}_{q}(SL_{n})$
and the symplectic leaves of the Poisson manifold $SL_{n}$ (see for
example \cite{HL2},\cite{G},\cite{Y}). In this paper, we present
another relation between $\mathcal{O}(SL_{n})$ endowed with the semiclassical
limit Poisson structure and $\mathcal{O}_{q}(SL_{n})$. 

In \cite{M} it was shown that if $q\in\mathbb{C}^{\times}$ is not
a root of unity then the centralizer of the trace element $\overline{\sigma}_{1}=\sum_{i}x_{i,i}$
in $\mathcal{O}_{q}(SL_{n})$ (resp. in $\mathcal{O}_{q}(M_{n})$
and $\mathcal{O}_{q}(GL_{n})$) is a maximal commutative subalgebra,
generated by certain sums of principal quantum minors. By Theorem
2.4 and 5.1 in \cite{DL2}, this subalgebra coincides with the subalgebra
of cocommutative elements in $\mathcal{O}_{q}(SL_{n})$ and also with
the subalgebra of invariants of the adjoint coaction. (This result
is generalized in \cite{AZ} for arbitrary characteristic and $q$
being a root of unity.) 

On the Poisson algebra side, the corresponding Poisson-subalgebra
of $\mathcal{O}(SL_{n})$ is generated by the coefficients of the
characteristic polynomial $\overline{c}_{1},\dots,\overline{c}_{n-1}$.
We prove the following:
\begin{thm}
For $n\geq1$ the subalgebra $\mathbb{C}[\overline{c}_{1},\dots,\overline{c}_{n-1}]$
(resp. $\mathbb{C}[c_{1},\dots,c_{n}]$ and $\mathbb{C}[c_{1},\dots,c_{n},c_{n}^{-1}]$)
is maximal Poisson-commutative in $\mathcal{O}(SL_{n})$ (resp. $\mathcal{O}(M_{n})$
and $\mathcal{O}(GL_{n})$) with respect to the semiclassical limit
Poisson structure.\label{thm:The-subalgebra}
\end{thm}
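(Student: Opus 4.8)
The plan is to work throughout with the explicit semiclassical bracket on $\mathcal{O}(M_n)=\mathbb{C}[x_{ij}]$, which in the standard normalization reads $\{x_{ij},x_{kl}\}=(\operatorname{sgn}(k-i)+\operatorname{sgn}(l-j))\,x_{il}x_{kj}$, and to exploit two homogeneity properties of the coefficients $c_1,\dots,c_n$ of the characteristic polynomial: each $c_k$ is homogeneous of total degree $k$, and each is of weight $0$ for the torus acting by conjugation (so that $x_{ij}$ has weight $\epsilon_i-\epsilon_j$). Since this quadratic bracket is additive for total degree and homogeneous for the conjugation-weight grading, every derivation $D_k:=\{c_k,-\}$ raises total degree by exactly $k$ and preserves weight. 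Consequently, if $f$ lies in the Poisson centralizer $Z$ of $\{c_1,\dots,c_n\}$, then so does each of its bihomogeneous components, and I may assume $f$ is homogeneous of some total degree $d$ and conjugation-weight $\mu$. The statement then splits into: (I) $\mathbb{C}[c_1,\dots,c_n]$ is Poisson-commutative and coincides with the algebra of conjugation invariants; and (II) every bihomogeneous $f$ with $D_kf=0$ for all $k$ lies in $\mathbb{C}[c_1,\dots,c_n]$. Together these give $Z=\mathbb{C}[c_1,\dots,c_n]$, which is exactly the assertion that the latter is maximal Poisson-commutative, and part (II) is the heart of the matter.

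For (I): involutivity $\{c_k,c_l\}=0$ is inherited from the quantum side, since the sums of principal quantum minors generating the centralizer in $\mathcal{O}_q(M_n)$ commute (by \cite{M}, \cite{DL2}) and their semiclassical symbols are precisely the $c_k$; alternatively it is the classical fact that the spectral invariants of the Sklyanin bracket are in involution. The identification $\mathbb{C}[c_1,\dots,c_n]=\mathcal{O}(M_n)^{GL_n}$ for the conjugation action is the first fundamental theorem for a single matrix, the conjugation invariants being the symmetric functions of the eigenvalues. This gives $\mathbb{C}[c_1,\dots,c_n]\subseteq Z$ and reduces (II) to the reverse inclusion $Z\subseteq\mathbb{C}[c_1,\dots,c_n]$.

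For (II) I would argue weight by weight, and it is essential to stay inside the regular functions. The generic symplectic leaves have dimension quadratic in $n$, far larger than $2n$, so over the fraction field the centralizer is much larger than $\mathbb{C}[c_i]$ — it contains rational Casimirs — and the equality of the theorem is a global, polynomial phenomenon. In particular a naive semiclassical-limit transfer of the result of \cite{M} fails, kernel dimension being only upper semicontinuous across the deformation. The workable tool is a leading-term analysis of the commuting derivations $D_k$. In the case $\mu\neq0$ one orders a monomial basis of the degree-$d$, weight-$\mu$ space so that a suitable $D_k$ (already $D_1$ when $n=2$) acts triangularly with nonvanishing diagonal; the resulting recursion forces the top-degree part, hence $f$, to vanish — exactly as in the rank-one computation, where $\{x_{11}+x_{22},-\}$ annihilates only $\mathbb{C}[x_{11}+x_{22}]$. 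In the case $\mu=0$ the $D_k$-conditions must be shown to cut the conjugation-weight-$0$ functions down to the fully $GL_n$-invariant ones: the leading symbols of the $D_k$ should impose $W$-symmetry of the restriction to diagonal matrices, whence, by Newton's identities, $f$ is a polynomial in the $c_k$.

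The main obstacle I anticipate is precisely this triangularity and nondegeneracy: exhibiting, in each bihomogeneous weight space, an ordering of a spanning set of monomials in which $D_1$ (or the family $D_k$) is triangular with invertible diagonal on the nonzero-weight part, and controlling the weight-zero part by diagonal restriction despite the kernel of that restriction (for instance $x_{12}x_{21}$) being nontrivial. I expect this to require an induction — on $n$ through the corner embedding $M_{n-1}\hookrightarrow M_n$, or on total degree — together with a choice of leading term adapted to the bracket. Finally, the $GL_n$ and $SL_n$ versions follow formally: inverting $c_n=\det$ is a Poisson localization, giving the centralizer $\mathbb{C}[c_1,\dots,c_n,c_n^{-1}]$ in $\mathcal{O}(GL_n)$; and $\mathcal{O}(SL_n)=\mathcal{O}(M_n)/(c_n-1)$ inherits the bracket with $c_n\equiv1$, so that the generators reduce to $\overline{c}_1,\dots,\overline{c}_{n-1}$ and one checks that the bihomogeneous analysis descends to the quotient.
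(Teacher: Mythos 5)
Your structural reductions are correct and partly parallel the paper: reducing maximality to computing the Poisson centralizer $Z$ of the family $c_{1},\dots,c_{n}$ (the paper proves the stronger statement that the centralizer of $c_{1}$ alone is $\mathbb{C}[c_{1},\dots,c_{n}]$, but your weaker target suffices for maximality); deducing involutivity from the commuting quantum minor sums exactly as in Proposition \ref{prop:commutation}; the bihomogeneity of the $D_{k}=\{c_{k},-\}$ in total degree and conjugation weight; and the correct warning that the result does not transfer naively across the semiclassical limit. The passage to $GL_{n}$ and $SL_{n}$ is also essentially right, though less formal than you suggest: the $SL_{n}$ descent requires choosing homogeneous lifts twisted by powers of $\det$ to control the $\mathbb{Z}/n\mathbb{Z}$-grading, as carried out in Proposition \ref{prop:implications}.

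However, your part (II) --- the declared ``heart of the matter'' --- is not proved, and it is exactly the content of the theorem. Both of its halves are left as anticipated obstacles: (a) that in each nonzero-weight space some $D_{k}$ acts triangularly with invertible diagonal, and (b) that in weight zero the $D_{k}$-conditions overcome the nontrivial kernel of restriction to diagonal matrices (e.g.\ $x_{1,2}x_{2,1}$). Neither claim is self-evident; for instance $D_{1}(x_{1,2})=x_{1,1}x_{1,2}-x_{1,2}x_{2,2}-2\sum_{i\geq3}x_{i,2}x_{1,i}$, so no naive monomial order makes $D_{1}$ triangular, and your proposed corner embedding $M_{n-1}\hookrightarrow M_{n}$ gives no evident compatibility with these cross terms. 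The paper's proof in Section \ref{sec:Proof} supplies precisely the missing device: filter $A_{n}=\mathcal{O}(M_{n})$ by $\deg_{x_{1,1}}$ (a filtration, not a grading, since $\{x_{1,1},x_{2,2}\}=x_{1,2}x_{2,1}$); in the associated graded bracket, $\mathrm{ad}_{\mathrm{gr}}x_{1,1}$ multiplies each monomial $m$ by $c(m)\,x_{1,1}$, where $c(m)$ counts the exponents of the off-diagonal first-row and first-column variables, so its kernel is the polynomial ring $\mathbb{C}[x_{1,1},x_{i,j}\ |\ 2\leq i,j\leq n]$. This is the rigorous form of your ``leading term adapted to the bracket,'' and it is what yields injectivity of the restriction-to-diagonal map $\delta\circ\varphi$ on $C(c_{1})$ --- resolving your weight-zero kernel problem --- with the induction on $n$ running through $B_{2,n}\cong A_{n-1}[t]$, and with the anti-diagonal reflection $\gamma$ (a Poisson antiautomorphism) upgrading the inductively obtained $S_{n-1}$-symmetry of the image to full $S_{n}$-symmetry when $n\geq3$. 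Without an argument of this kind, your text is a plausible programme rather than a proof.
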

It is easy to deduce from \cite{DL1} or \cite{DL2} that $\{c_{i},c_{j}\}=0$
$(1\leq i,j\leq n)$ in $\mathcal{O}(M_{n})$ (see Proposition \ref{prop:commutation}
below). Therefore, Theorem \ref{thm:The-subalgebra} is a direct consequence
of the following statement:
\begin{thm}
For $n\geq1$ the Poisson-centralizer of $\overline{c}_{1}$ in $\mathcal{O}(SL_{n})$
(resp. $c_{1}\in\mathcal{O}(M_{n})$ and $\mathcal{O}(GL_{n})$) equipped
with the semiclassical limit Poisson bracket is generated as a subalgebra
by

\begin{itemize}
\item $\overline{c}_{1},\dots\overline{c}_{n-1}$ in the case of $\mathcal{O}(SL_{n})$,
\item $c_{1},\dots,c_{n}$ in the case of $\mathcal{O}(M_{n})$, and
\item $c_{1},\dots,c_{n},c_{n}^{-1}$ in the case of $\mathcal{O}(GL_{n})$.
\label{thm:The-centralizer-of-the-trace}
\end{itemize}
\end{thm}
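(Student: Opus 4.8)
The plan is to treat $\mathcal{O}(M_{n})$ first and then descend to $GL_{n}$ and $SL_{n}$. Write $R=\mathcal{O}(M_{n})=\mathbb{C}[x_{i,j}]$, put $D=\{c_{1},-\}$ and $X=(x_{i,j})$. The first step is to compute $D$ on generators directly from the semiclassical bracket; a routine calculation gives, up to a nonzero scalar,
\[
D(x_{i,j})=\sum_{k}\bigl(\operatorname{sgn}(i-k)+\operatorname{sgn}(j-k)\bigr)x_{i,k}x_{k,j},
\]
equivalently $D(X)=[X,X_{+}-X_{-}]$, where $X_{+},X_{-}$ are the strictly upper and strictly lower triangular parts of $X$. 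Thus the Hamiltonian flow of the trace is an isospectral (Lax/Toda-type) flow, so $D(\operatorname{tr}X^{m})=\operatorname{tr}\bigl(mX^{m-1}[X,X_{+}-X_{-}]\bigr)=0$ by cyclicity, and $\mathbb{C}[c_{1},\dots,c_{n}]\subseteq\ker D$. By Proposition \ref{prop:commutation} this is the inclusion to be reversed.

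For the reverse inclusion I would grade $R$ by \emph{off-diagonal degree} ($\deg_{\mathrm{off}}x_{i,j}=1$ for $i\neq j$, and $0$ for $i=j$) and split $D=D^{(0)}+D^{(1)}+D^{(2)}$ into the parts raising $\deg_{\mathrm{off}}$ by $0,1,2$. The part $D^{(0)}$ is $\mathbb{C}[x_{1,1},\dots,x_{n,n}]$-linear and scales an off-diagonal monomial $\mu=\prod x_{i,j}^{a_{i,j}}$ by $\Lambda(\mu)=\sum_{i}\bigl(\sum_{j\neq i}(a_{i,j}+a_{j,i})\operatorname{sgn}(j-i)\bigr)x_{i,i}$; inspecting the smallest index incident to $\mu$ shows $\Lambda(\mu)\neq0$ for $\mu\neq1$, whence $\ker D^{(0)}=\mathbb{C}[x_{1,1},\dots,x_{n,n}]$. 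Since $D$ does not lower $\deg_{\mathrm{off}}$, the lowest component of $D(f)=0$ is $D^{(0)}(f_{m})=0$ for the lowest nonzero part $f_{m}$; this forces $f_{m}\in\ker D^{(0)}$, hence $m=0$. Therefore the restriction-to-diagonal map $\phi\colon\ker D\to\mathbb{C}[x_{1,1},\dots,x_{n,n}]$, $f\mapsto f(\operatorname{diag})$, is an injective graded ring homomorphism, and on the given generators $\phi(c_{k})=e_{k}(x_{1,1},\dots,x_{n,n})$. Consequently $\mathbb{C}[e_{1},\dots,e_{n}]\subseteq\phi(\ker D)\subseteq\mathbb{C}[x_{1,1},\dots,x_{n,n}]$, and the whole problem collapses to showing that each image $\phi(f)$ is \emph{symmetric}, i.e.\ $\phi(\ker D)=\mathbb{C}[e_{1},\dots,e_{n}]$.

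This symmetry is the main obstacle. I would extract it from the recursion forced by $D(f)=0$: writing $f=f_{0}+f_{2}+\cdots$ (the $\deg_{\mathrm{off}}=1$ part vanishes since $D^{(1)}$ kills the diagonal generators), the equations $D^{(0)}(f_{p})=-\bigl(D^{(1)}(f_{p-1})+D^{(2)}(f_{p-2})\bigr)$ are solvable in polynomials only if each right-hand side is divisible, monomial by monomial, by the corresponding eigenvalue $\Lambda$. The first obstruction already gives $(x_{i,i}-x_{j,j})\mid(\partial_{i}-\partial_{j})\phi(f)$ for all $i,j$; as the case $n=2$ shows, this is necessary but not sufficient, so the real content is that the \emph{entire} tower of divisibility obstructions forces $\phi(f)\in\mathbb{C}[e]$. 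I expect to prove invariance under each simple transposition $s_{i}=(i,i+1)$ separately, by specializing the variables outside the pair $(i,i+1)$ so that the relevant part of the recursion collapses to the rank-one computation, where one checks directly that survival of all obstructions forces symmetry in $x_{i,i},x_{i+1,i+1}$. Conceptually this reflects that the isospectral Toda flow connects the diagonal fixed points $\operatorname{diag}(t_{\sigma(1)},\dots,t_{\sigma(n)})$ across all permutations $\sigma$, so that a global first integral must agree on them. Granting this, $\phi(\ker D)=\mathbb{C}[e]=\phi(\mathbb{C}[c])$, and injectivity of $\phi$ yields $\ker D=\mathbb{C}[c_{1},\dots,c_{n}]$.

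Finally I would transfer the result. For $GL_{n}$ one localizes at $c_{n}=\det$: since $c_{n}\in\ker D$, any kernel element of $R[c_{n}^{-1}]$ has the form $g/c_{n}^{m}$ with $g\in\ker D=\mathbb{C}[c]$, giving $\mathbb{C}[c_{1},\dots,c_{n},c_{n}^{-1}]$. For $SL_{n}$ one passes to $R/(c_{n}-1)$; as $c_{n}-1\in\ker D$ the ideal is $D$-stable, $D$ descends, and the same leading-term argument (now with $\overline{c}_{n}=1$) produces $\mathbb{C}[\overline{c}_{1},\dots,\overline{c}_{n-1}]$. Combined with Proposition \ref{prop:commutation}, this establishes Theorem \ref{thm:The-centralizer-of-the-trace}, and hence Theorem \ref{thm:The-subalgebra}.
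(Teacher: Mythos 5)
Your reduction steps are correct, and they take a genuinely different route from the paper's: the Lax form $D(X)=[X,X_{+}-X_{-}]$ for $D=\{c_{1},\cdot\}$, the splitting $D=D^{(0)}+D^{(1)}+D^{(2)}$ by off-diagonal degree, the eigenvalue formula $D^{(0)}(\mu g)=\Lambda(\mu)\mu g$ with $\Lambda(\mu)\neq0$ for $\mu\neq1$ (your smallest-incident-index argument is right), and hence the injectivity of the restriction-to-diagonal map $\phi$ on $\ker D$ all check out. This replaces the paper's machinery for the injectivity half \textemdash{} the $x_{1,1}$-filtration, the associated graded bracket $\{.,.\}_{\mathrm{gr}}$, the computation of $C^{\mathrm{gr}}(x_{1,1})$, and the induction on $n$ through the Poisson quotient $B_{2,n}\cong A_{n-1}[t]$ \textemdash{} with a single grading argument, which is arguably cleaner. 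You have also correctly identified that everything then hinges on showing $\phi(\ker D)\subseteq\mathbb{C}[e_{1},\dots,e_{n}]$, i.e.\ the $S_{n}$-symmetry of the image; this is exactly the shape of the paper's proof as well, where symmetry is obtained from the induction hypothesis (giving $S_{n-1}$-symmetry in $t_{2},\dots,t_{n}$) combined with the anti-Poisson reflection $\gamma(x_{i,j})=x_{n+1-i,n+1-j}$.

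However, the symmetry half \textemdash{} the real content of the theorem \textemdash{} is not proved; you explicitly defer it (``I expect to prove\dots''), and both ingredients of your plan have genuine holes. First, ``specializing the variables outside the pair $(i,i+1)$'' is only legitimate along a $D$-stable ideal, and setting entries to arbitrary constants is not compatible with $D$; what does work is killing whole outer rows and columns (these generate Poisson ideals \textemdash{} in effect the paper's $\varphi$ and its image under $\gamma$), but you neither identify this nor verify the stability, and it is the crux of the reduction. Second, even granting the collapse to the pair $(i,i+1)$, the rank-one verification that ``survival of all obstructions forces symmetry'' in $x_{i,i},x_{i+1,i+1}$ is precisely the nontrivial $n=2$ base case, which the paper establishes by the explicit computation of Proposition \ref{prop:n=00003D2} and which you nowhere carry out; note that your first obstruction $(x_{i,i}-x_{j,j})\mid(\partial_{i}-\partial_{j})\phi(f)$ is passed by the non-symmetric $(x_{1,1}-x_{2,2})^{3}$, so the full tower really is needed, and the Toda heuristic is not an argument (diagonal matrices are equilibria of the flow, so trajectories do not connect them). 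Finally, your one-line descent to $SL_{n}$ is also gapped: an element Poisson-commuting with $\overline{c}_{1}$ lifts only to some $h$ with $\{c_{1},h\}\in(\det-1)$, not $\{c_{1},h\}=0$, and $(\det-1)$ is not homogeneous for your off-diagonal grading, so ``the same leading-term argument'' does not apply on the quotient; closing this requires something like the $\mathbb{Z}/n\mathbb{Z}$-graded homogenization argument of Proposition \ref{prop:implications}.
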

The proof is based on modifying the Poisson bracket of the algebras
that makes an induction possible. A similar idea is used in the proof
of the analogous result in the quantum setup (see \cite{M}).

It is well known that the coefficient functions $c_{1},\dots,c_{n}\in\mathcal{O}(M_{n})$
of the characteristic polynomial generate the subalgebra $\mathcal{O}(M_{n})^{GL_{n}}$
of $GL_{n}$-invariants with respect to the adjoint action. This implies
that the subalgebra coincides with the Poisson center of the coordinate
ring $\mathcal{O}(M_{n})$ endowed with the Kirillov-Kostant-Souriau
(KKS) Poisson bracket. Hence, Theorem \ref{thm:The-subalgebra} for
$\mathcal{O}(M_{n})$ can be interpreted as an interesting interplay
between the KKS and the semiclassical limit Poisson structure. Namely,
while the subalgebra $\mathcal{O}(M_{n})^{GL_{n}}$ is contained in
every maximal Poisson-commutative subalgebra with respect to the former
Poisson bracket, it is contained in only one maximal Poisson-commutative
subalgebra (itself) with respect to the latter Poisson bracket.

A Poisson-commutative subalgebra is also called an involutive (or
Hamiltonian) system, while a maximal one is called a complete involutive
system (see Section \ref{sec:Prerequisites} or \cite{V}). Such a
system is integrable if the (Krull) dimension of the generated subalgebra
is sufficiently large. In our case, the subalgebra generated by the
elements $c_{1},\dots,c_{n-1}$ is not integrable, as its dimension
is $n-1$ (resp. $n$ for $GL_{n}$) instead of the required ${n+1 \choose 2}-1$
(resp. ${n+1 \choose 2}$ for $GL_{n}$), see Remark \ref{rem:The-rank}.

The article is organized as follows: First, we introduce the required
notions, and in Section \ref{sec:main} we prove that the three statements
in Theorem \ref{thm:The-centralizer-of-the-trace} are equivalent.
In Section \ref{prop:implications}, we prove Theorem \ref{thm:The-centralizer-of-the-trace}
for $n=2$ as a starting case of an induction presented in Section
\ref{sec:Proof} that completes the proof of the theorem. In the article,
every algebra is understood over the field $\mathbb{C}$.

\section{Preliminaries\label{sec:Prerequisites}}

\subsection{Poisson algebras}

First, we collect the basic notions about Poisson algebras we use
in the article. For further details about Poisson algebras, see \cite{V}. 

A commutative Poisson algebra $\big(A,\{.,.\}\big)$ is a unital commutative
associative algebra $A$ together with a bilinear operation $\{.,.\}:A\times A\to A$
called the Poisson bracket such that it is antisymmetric, satisfies
the Jacobi identity, and for any $a\in A$, $\{a,.\}:A\to A$ is a
derivation. For commutative Poisson algebras $A$ and $B$, the map
$\varphi:A\to B$ is a morphism of Poisson algebras if it is both
an algebra homomorphism and a Lie-homomorphism. 

There is a natural notion of Poisson subalgebra (i.e. a subalgebra
that is also a Lie-subalgebra), Poisson ideal (i.e an ideal that is
also a Lie-ideal) and quotient Poisson algebra (as the quotient Lie-algebra
inherits the bracket). The Poisson centralizer $C(a)$ of an element
$a\in A$ is defined as $\{b\in A\ |\ \{a,b\}=0\}$. Clearly, it is
a Poisson subalgebra. Analogously, $a\in A$ is called Poisson-central
if $C(a)=A$. One says that a subalgebra $C\leq A$ is Poisson-commutative
(or involutive) if $\{c,d\}=0$ for all $c,d\in C$ and it is maximal
Poisson-commutative (or maximal involutive) if there is no Poisson-commutative
subalgebra in $A$ that strictly contains $C$.

The Poisson center (or Casimir subalgebra) of $A$ is $Z(A):=\{a\in A\ |\ C(a)=A\}$.
Let $A$ be a reduced, finitely generated commutative Poisson algebra.
The rank $\mathrm{Rk}\{.,.\}$ of the Poisson structure $\{.,.\}$
is defined by the rank of the matrix $\big(\{g_{i},g_{j}\})_{i,j}\in A^{N\times N}$
for a generating system $g_{1},\dots,g_{N}\in A$. (One can prove
that it is independent of the chosen generating system.) A maximal
Poisson-commutative subalgebra $C$ is called integrable if
\[
\dim C=\dim A-\frac{1}{2}\mathrm{Rk}\{.,.\}
\]
The inequality $\leq$ holds for any Poisson-commutative subalgebra
(Proposition II.3.4 in \cite{V}), hence integrability is a maximality
condition on the size of $C$ that does not necessarily hold for every
maximal involutive system. 

\subsection{Filtered Poisson algebras}
\begin{defn}
A filtered Poisson algebra is a Poisson algebra together with an ascending
chain of subspaces $\{\mathcal{F}^{d}\}_{d\in\mathbb{N}}$ in $A$
such that 

\begin{itemize}
\item $A=\cup_{d\in\mathbb{N}}\mathcal{F}^{d}$,
\item $\mathcal{F}^{d}\cdot\mathcal{F}^{e}\subseteq\mathcal{F}^{d+e}$ for
all $d,e\in\mathbb{N}$, and 
\item $\{\mathcal{F}^{d},\mathcal{F}^{e}\}\subseteq\mathcal{F}^{d+e}$ for
all $d,e\in\mathbb{N}$.
\end{itemize}
Together with the filtration preserving morphisms of Poisson algebras,
they form a category. 
\end{defn}
For a filtered Poisson algebra $A$, we may define its associated
graded Poisson algebra $\mathrm{gr}A$ as
\[
\mathrm{gr}(A):=\bigoplus_{d\in\mathbb{N}}\mathcal{F}^{d}/\mathcal{F}^{d-1}
\]
where we used the simplifying notation $\mathcal{F}^{-1}=\{0\}$.
The multiplication of $\mathrm{gr}(A)$ is defined the usual way:
\[
\mathcal{F}^{d}/\mathcal{F}^{d-1}\times\mathcal{F}^{e}/\mathcal{F}^{e-1}\to\mathcal{F}^{d+e}/\mathcal{F}^{d+e-1}
\]
\[
\big(x+\mathcal{F}^{d-1},y+\mathcal{F}^{e-1}\big)\mapsto xy+\mathcal{F}^{d+e-1}
\]
Analogously, the Poisson structure of $\mathrm{gr}(A)$ is defined
by $\big(x+\mathcal{F}^{d-1},y+\mathcal{F}^{e-1}\big)\mapsto\{x,y\}+\mathcal{F}^{d+e-1}$.
One can check that this way $\mathrm{gr}(A)$ is a Poisson algebra. 

Let $(S,+)$ be an abelian monoid. (We will only use this definition
for $S=\mathbb{N}$ and $S=\mathbb{Z}/n\mathbb{Z}$ for some $n\in\mathbb{N}$.)
An $S$-graded Poisson algebra $R$ is a Poisson algebra together
with a fixed grading
\[
R=\oplus_{d\in S}R_{d}
\]
such that $R$ is both a graded algebra (i.e. $R_{d}\cdot R_{e}\subseteq R_{d+e}$
for all $d,e\in S$) and a graded Lie algebra (i.e. $\{R_{d},R_{e}\}\subseteq R_{d+e}$
for all $d,e\in S$) with respect to the given grading.

The above construction $A\mapsto\mathrm{gr}(A)$ yields an $\mathbb{N}$-graded
Poisson algebra. In fact, $\mathrm{gr}(.)$ can be turned into a functor:
for a morphism of filtered Poisson algebras $f:\big(A,\{\mathcal{F}^{d}\}_{d\in\mathbb{N}}\big)\to\big(B,\{\mathcal{G}^{d}\}_{d\in\mathbb{N}}\big)$
we define
\[
\mathrm{gr}(f):\mathrm{gr}(A)\to\mathrm{gr}(B)\qquad\big(x_{d}+\mathcal{F}^{d-1}\big)_{d\in\mathbb{N}}\mapsto\big(f(x_{d})+\mathcal{G}^{d-1}\big)_{d\in\mathbb{N}}
\]
One can check that it is indeed well defined and preserves composition. 
\begin{rem}
Given an $\mathbb{N}$-graded Poisson algebra $R=\oplus_{d\in\mathbb{N}}R_{d}$,
one has a natural way to associate a filtered Poisson algebra to it.
Namely, let $\mathcal{F}^{d}:=\oplus_{k\leq d}R_{k}$. In this case,
the associated graded Poisson algebra $\mathrm{gr}R$ of $\big(R,\{\mathcal{F}^{d}\}_{d\in\mathbb{N}}\big)$
is isomorphic to $R$.
\end{rem}

\subsection{The Kirillov-Kostant-Souriau bracket\label{subsec:The-Kirillov-Kostant-Souriau-bracket}}

A classical example of a Poisson algebra is given by the Kirillov-Kostant-Souriau
(KKS) bracket on $\mathcal{O}(\mathfrak{g}^{*})$, the coordinate
ring of the dual of a finite-dimensional (real or complex) Lie algebra
$\big(\mathfrak{g},[.,.]\big)$ (see \cite{ChP} Example 1.1.3, or
\cite{W} Section 3). 

It is defined as follows: a function $f\in\mathcal{O}(\mathfrak{g}^{*})$
at a point $v\in\mathfrak{g}^{*}$ has a differential $\mathrm{d}f_{v}\in T_{v}^{*}\mathfrak{g}^{*}$
where we can canonically identify the spaces $T_{v}^{*}\mathfrak{g}^{*}\cong T_{0}^{*}\mathfrak{g}^{*}\cong\mathfrak{g}^{**}\cong\mathfrak{g}$.
Hence, we may define the Poisson bracket on $\mathcal{O}(\mathfrak{g}^{*})$
as 
\[
\{f,g\}(v):=[\mathrm{d}f_{v},\mathrm{d}g_{v}](v)
\]
for all $f,g\in\mathcal{O}(\mathfrak{g}^{*})$ and $v\in\mathfrak{g}^{*}$.
It is clear that it is a Lie-bracket but it can be checked that the
Leibniz-identity is also satisfied. For $\mathfrak{g}=\mathfrak{gl}_{n}$,
it gives a Poisson bracket on $\mathcal{O}(M_{n})$.

Alternatively, one can define this Poisson structure via semiclassical
limits.

\subsection{Semiclassical limits\label{subsec:Semi-classical-limits}}

Let $A=\cup_{d\in\mathbb{Z}}\mathcal{A}^{d}$ be a $\mathbb{Z}$-filtered
algebra such that its associated graded algebra $\mathrm{gr}(A):=\oplus_{d\in\mathbb{Z}}\mathcal{A}^{d}/\mathcal{A}^{d-1}$
is commutative. The Rees ring of $A$ is defined as
\[
\mathrm{Rees}(A):=\bigoplus_{d\in\mathbb{Z}}\mathcal{A}^{d}h^{d}\subseteq A[h,h^{-1}]
\]
Using the obvious multiplication, it is a $\mathbb{Z}$-graded algebra.
The semiclassical limit of $A$ is the Poisson algebra $\mathrm{Rees}(A)/h\mathrm{Rees}(A)$
together with the bracket
\[
\{a+h\mathcal{A}^{m},b+h\mathcal{A}^{n}\}:=\frac{1}{h}[a,b]+\mathcal{A}^{n+m-2}\in\mathcal{A}^{n+m-1}/\mathcal{A}^{n+m-2}
\]
for all homogeneous elements $a+h\mathcal{A}^{m}\in\mathcal{A}^{m+1}/h\mathcal{A}^{m}$,
$b+h\mathcal{A}^{n}\in\mathcal{A}^{n+1}/h\mathcal{A}^{n}$. The definition
is valid as the underlying algebra of $\mathrm{Rees}(A)/h\mathrm{Rees}(A)$
is $\mathrm{gr}(A)$ that is assumed to be commutative, hence $[a,b]\in h\mathcal{A}^{m+n-1}$.

The Poisson algebra $\mathcal{O}(\mathfrak{g}^{*})$ with the KKS
bracket can be obtained as the semiclassical limit of $U\mathfrak{g}$,
see \cite{G}, Example 2.6.

\subsection{Quantized coordinate rings\label{subsec:Quantized-coordinate-rings}}

Assume that $n\in\mathbb{N}^{+}$ and define $\mathcal{O}_{t}(M_{n})$
as the unital $\mathbb{C}$-algebra generated by the $n^{2}$ generators
$x_{i,j}$ for $1\leq i,j\leq n$ over $\mathbb{C}[t,t^{-1}]$ that
are subject to the following relations:
\[
x_{i,j}x_{k,l}=\begin{cases}
x_{k,l}x_{i,j}+(t-t^{-1})x_{i,l}x_{k,j} & \textrm{if }i<k\textrm{ and }j<l\\
tx_{k,l}x_{i,j} & \textrm{if }(i=k\textrm{ and }j<l)\textrm{ or }(j=l\textrm{ and }i<k)\\
x_{k,l}x_{i,j} & \textrm{if }(i>k\textrm{ and }j<l)\textrm{ or }(j>l\textrm{ and }i<k)
\end{cases}
\]
for all $1\leq i,j,k,l\leq n$. It turns out to be a finitely generated
$\mathbb{C}[t,t^{-1}]$-algebra that is a Noetherian domain. (For
a detailed exposition, see \cite{BG}.) Furthermore, it can be endowed
with a coalgebra structure by setting $\varepsilon(x_{i,j})=\delta_{i,j}$
and $\Delta(x_{i,j})=\sum_{k=1}^{n}x_{i,k}\otimes x_{k,j}$. It turns
$\mathcal{O}_{t}(M_{n})$ into a bialgebra. 

For $q\in\mathbb{C}^{\times}$, the quantized coordinate ring of $n\times n$
matrices with parameter $q$ is defined as the $\mathbb{C}$-algebra
\[
\mathcal{O}_{q}(M_{n}):=\mathcal{O}_{t}(M_{n})/(t-q)
\]
In this article, we only deal with the case when $q$ is not a root
of unity, then the algebra is called the generic quantized coordinate
ring of $M_{n}$.

Similarly, one can define the non-commutative deformations of the
coordinate rings of $GL_{n}$ and $SL_{n}$ using the quantum determinant

\[
\mathrm{det}_{q}:=\sum_{s\in S_{n}}(-q)^{\ell(s)}x_{1,s(1)}x_{2,s(2)}\dots x_{n,s(n)}
\]
where $\ell(\sigma)$ stands for the length of $\sigma$ in the Coxeter
group $S_{n}$. Then \textendash{} analogously to the classical case
\textendash{} one defines 
\[
\mathcal{O}_{q}(SL_{n}):=\mathcal{O}_{q}(M_{n})/(\mathrm{det}_{q}-1)\qquad\mathcal{O}_{q}(GL_{n}):=\mathcal{O}_{q}(M_{n})\big[\mathrm{det}_{q}^{-1}\big]
\]
by localizing at the central element $\det_{q}$.

\subsection{Semiclassical limits of quantized coordinate rings}

The semiclassical limits of $\mathcal{O}_{q}(SL_{n})$ can be obtained
via the slight modification of process of Section \ref{subsec:Semi-classical-limits}
(see \cite{G}, Example 2.2). The algebra $R:=\mathcal{O}_{t}(M_{n})$
can be endowed with a $\mathbb{Z}$-filtration by defining $\mathcal{F}^{n}$
to be the span of monomials that are the product of at most $n$ variables.
However, instead of defining a Poisson structure on $\mathrm{Rees}(R)/h\mathrm{Rees}(R)$
with respect to this filtration, consider the algebra $R/(t-1)R$
that is isomorphic to $\mathcal{O}(M_{n})$ as an algebra. The semiclassical
limit Poisson bracket is defined as
\[
\{\bar{a},\bar{b}\}:=\frac{1}{t-1}(ab-ba)+(t-1)R\in R/(t-1)R
\]
for any two representing elements $a,b\in R$ for $\bar{a},\bar{b}\in R/(t-1)R$.
One can check that it is a well-defined Poisson bracket.

This Poisson structure of $\mathcal{O}(M_{n})$ can be given explicitly
by the following relations:
\[
\{x_{i,j},x_{k,l}\}=\begin{cases}
2x_{i,l}x_{k,j} & \textrm{if }i<k\textrm{ and }j<l\\
x_{i,j}x_{k,l} & \textrm{if }(i=k\textrm{ and }j<l)\textrm{ or }(j=l\textrm{ and }i<k)\\
0 & \textrm{otherwise}
\end{cases}
\]
extended according to the Leibniz-rule (see \cite{G}). It is a quadratic
Poisson structure in the sense of \cite{V}, Definition II.2.6. The
semiclassical limit for $GL_{n}$ and $SL_{n}$ is defined analogously
using $\mathcal{O}_{q}(GL_{n})$ and $\mathcal{O}_{q}(SL_{n})$ or
by localization (resp. by taking quotient) at the Poisson central
element $\mathrm{det}$ (resp. $\mathrm{det}-1$) in $\mathcal{O}(M_{n})$.

\subsection{Coefficients of the characteristic polynomial}

Consider the characteristic polynomial function $M_{n}\to\mathbb{C}[x]$,
$A\mapsto\mathrm{det}(A-xI)$. Let us define the elements $c_{0},c_{1},\dots,c_{n}\in\mathcal{O}(M_{n})$
as
\[
\mathrm{det}(A-xI)=\sum_{i=0}^{n}(-1)^{i}c_{i}x^{n-i}
\]
In particular, $c_{0}=1$, $c_{1}=\mathrm{tr}$ and $c_{n}=\mathrm{det}$.
Their images in $\mathcal{O}(SL_{n})\cong\mathcal{O}(M_{n})/(\mathrm{det}-1)$
are denoted by $\overline{c}_{1},\dots,\overline{c}_{n-1}$. If ambiguity
may arise, we will write $c_{i}(A)$ for the element corresponding
to $c_{i}$ for an algebra $A$ with a fixed isomorphism $A\cong\mathcal{O}(M_{k})$
for some $k$.

The coefficient functions $c_{1},\dots,c_{n}$ can also be expressed
via matrix minors as follows: For $I,J\subseteq\{1,\dots,n\}$, $I=(i_{1},\dots,i_{k})$
and $J=(j_{1},\dots,j_{k})$ define
\[
[I\,|\,J]:=\sum_{s\in S_{k}}\mathrm{sgn}(s)x_{i_{1},j_{s(1)}}\dots x_{i_{k},j_{s(k)}}
\]
i.e. it is the determinant of the subalgebra generated by $\{x_{i,j}\}_{i\in I,j\in J}$
that can be identified with $\mathcal{O}(M_{k})$. Then

\[
c_{i}=\sum_{|I|=i}[I\,|\,I]\in\mathcal{O}(M_{n})
\]
for all $1\leq i\leq n$. It is well.known that $c_{1},\dots,c_{n}$
generate the same subalgebra of $\mathcal{O}(M_{n})$ as the trace
functions $A\mapsto\mathrm{Tr}(A^{k})$, namely, the subalgebra $\mathcal{O}(M_{n})^{GL_{n}}$
of $GL_{n}$-invariants with respect to the adjoint action.

\section{Equivalence of the statements\label{sec:main}}

Consider $\mathcal{O}(M_{n})$ endowed with the semiclassical limits
Poisson bracket. As it is discussed in the Introduction, Theorem \ref{thm:The-subalgebra}
follows directly from Theorem \ref{thm:The-centralizer-of-the-trace}
and Proposition \ref{prop:commutation}.

The following proposition shows that it is enough to prove Theorem
\ref{thm:The-centralizer-of-the-trace} for the case of $\mathcal{O}(M_{n})$.
\begin{prop}
For any $n\in\mathbb{N}^{+}$ the following are equivalent:\label{prop:implications}

\begin{enumerate}
\item The Poisson-centralizer of $c_{1}\in\mathcal{O}(M_{n})$ is generated
by $c_{1},\dots,c_{n}$.
\item The Poisson-centralizer of $c_{1}\in\mathcal{O}(GL_{n})$ is generated
by $c_{1},\dots,c_{n},c_{n}^{-1}$.
\item The Poisson-centralizer of $\overline{c}_{1}\in\mathcal{O}(SL_{n})$
is generated by $\overline{c}_{1},\dots,\overline{c}_{n-1}$.
\end{enumerate}
\end{prop}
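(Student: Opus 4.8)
The three algebras are tied together by the two elementary operations at the central element $c_{n}=\det$: the localization $\mathcal{O}(GL_{n})=\mathcal{O}(M_{n})[c_{n}^{-1}]$ and the quotient $\mathcal{O}(SL_{n})=\mathcal{O}(M_{n})/(c_{n}-1)$. Since $c_{n}$ is Poisson-central, so is $c_{n}-1$, and both operations are morphisms of Poisson algebras; in particular the Hamiltonian $\{c_{1},\cdot\}$ is $\mathbb{C}[c_{n}]$-linear and, by Proposition \ref{prop:commutation}, $\mathbb{C}[c_{1},\dots,c_{n}]\subseteq C(c_{1})$. The plan is to prove the cycle $(1)\Rightarrow(2)\Rightarrow(3)\Rightarrow(1)$, together with the reverse localization $(2)\Rightarrow(1)$.

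For $(1)\Leftrightarrow(2)$ I would use that localizing at the central element $c_{n}$ commutes with taking centralizers: from $\{c_{1},a/c_{n}^{k}\}=\{c_{1},a\}/c_{n}^{k}$ one gets that $C(c_{1})$ in $\mathcal{O}(GL_{n})$ equals $C(c_{1})[c_{n}^{-1}]$ computed in $\mathcal{O}(M_{n})$, while conversely $C(c_{1})$ in $\mathcal{O}(M_{n})$ is the intersection of the former with $\mathcal{O}(M_{n})$. Since the $c_{i}$ are algebraically independent and $\det$ is irreducible in the factorial ring $\mathcal{O}(M_{n})$, one has $\mathbb{C}[c_{1},\dots,c_{n},c_{n}^{-1}]\cap\mathcal{O}(M_{n})=\mathbb{C}[c_{1},\dots,c_{n}]$, and the equivalence follows in both directions.

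Throughout I would grade $\mathcal{O}(M_{n})=\mathbb{C}[x_{ij}]$ by total degree. The quadratic bracket then satisfies $\{R_{a},R_{b}\}\subseteq R_{a+b}$, so $\mathcal{O}(M_{n})$ is a graded Poisson algebra, each $c_{i}$ is homogeneous of degree $i$, $c_{n}$ is a homogeneous non-zero-divisor, and both $C(c_{1})$ and $\mathbb{C}[c_{1},\dots,c_{n}]$ are graded. For $(3)\Rightarrow(1)$ I equip $\mathcal{O}(SL_{n})$ with the induced filtration, for which $\mathrm{gr}\,\mathcal{O}(SL_{n})\cong\mathcal{O}(M_{n})/(c_{n})$ by dehomogenization. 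Given a homogeneous $b\in C(c_{1})$ of degree $d$, its image lies in $C(\overline{c}_{1})=\mathbb{C}[\overline{c}_{1},\dots,\overline{c}_{n-1}]$ by (3), so its symbol in $\mathcal{O}(M_{n})/(c_{n})$ is a weighted-homogeneous $Q_{d}(c_{1},\dots,c_{n-1})$; hence $b-Q_{d}=c_{n}b'$ with $b'$ homogeneous of degree $d-n$. As $b,Q_{d},c_{n}\in C(c_{1})$ and $c_{n}$ is a central non-zero-divisor, $b'\in C(c_{1})$, and induction on $d$ gives $b'\in\mathbb{C}[c_{1},\dots,c_{n}]$, whence $b\in\mathbb{C}[c_{1},\dots,c_{n}]$.

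The heart of the argument is $(2)\Rightarrow(3)$, where I must show that the quotient map $C(c_{1})\to C(\overline{c}_{1})$ from $\mathcal{O}(GL_{n})$ to $\mathcal{O}(SL_{n})$ is surjective. Because $\{c_{1},\cdot\}$ commutes with multiplication by the non-zero-divisor $c_{n}-1$, a diagram chase (snake lemma) applied to $0\to\mathcal{O}(GL_{n})\xrightarrow{c_{n}-1}\mathcal{O}(GL_{n})\to\mathcal{O}(SL_{n})\to0$ identifies the cokernel of this map with the kernel of $c_{n}-1$ acting on $\operatorname{coker}\{c_{1},\cdot\}$. Here the decisive feature of $\mathcal{O}(GL_{n})$ enters: $c_{n}$ is invertible yet homogeneous of nonzero degree $n$ for the $\mathbb{Z}$-grading, so multiplication by $c_{n}$ is a bijective degree-$n$ operator on the graded cokernel, and a leading-term argument then forces $c_{n}-1$ to be regular on it. Thus the map is surjective, and (2) identifies $C(\overline{c}_{1})$ with the image of $\mathbb{C}[c_{1},\dots,c_{n},c_{n}^{-1}]$, namely $\mathbb{C}[\overline{c}_{1},\dots,\overline{c}_{n-1}]$. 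I expect this regularity of $c_{n}-1$ on the cokernel — equivalently, the liftability of centralizing elements — to be the one nonformal step: attempting $(1)\Rightarrow(3)$ directly in $\mathcal{O}(M_{n})$ founders precisely because $c_{n}$ need not be a non-zero-divisor on $\operatorname{coker}\{c_{1},\cdot\}$ there, and passing through $\mathcal{O}(GL_{n})$, where $c_{n}$ is a unit, is exactly what repairs this.
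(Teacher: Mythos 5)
Your proof is correct, and while the $(1)\Leftrightarrow(2)$ step is identical to the paper's (localization at the Poisson-central $\det$, together with $\mathbb{C}[c_{1},\dots,c_{n},c_{n}^{-1}]\cap\mathcal{O}(M_{n})=\mathbb{C}[c_{1},\dots,c_{n}]$), your treatment of the $SL_{n}$ equivalence takes a genuinely different route. The paper argues both directions between $(1)$ and $(3)$ by explicit degree bookkeeping: it uses the $\mathbb{Z}/n\mathbb{Z}$-grading of $\mathcal{O}(SL_{n})$, replaces a lift $h=\sum_{j}h_{jn+k}$ of a homogeneous element by the homogeneous representative $h'=\sum_{j}h_{jn+k}\det^{d-j}$ (respectively homogenizes the polynomial $p$ to $q$ using $t_{n}$), and concludes from $(\det-1)\cap\mathcal{O}(M_{n})_{m}=0$; no associated graded or homological machinery appears in its proof of this proposition. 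Your $(3)\Rightarrow(1)$ --- peeling off factors of $c_{n}$ by induction on degree via $\mathrm{gr}\,\mathcal{O}(SL_{n})\cong\mathcal{O}(M_{n})/(c_{n})$, using that $c_{1},\dots,c_{n-1}$ remain algebraically independent modulo $(c_{n})$ (check by restricting to diagonal matrices) --- is a repackaging of the same leading-form phenomenon and checks out. Your $(2)\Rightarrow(3)$ via the snake lemma applied to $0\to\mathcal{O}(GL_{n})\xrightarrow{c_{n}-1}\mathcal{O}(GL_{n})\to\mathcal{O}(SL_{n})\to0$ is the novel part and is slick: since $\{c_{1},\cdot\}$ is homogeneous of degree $1$, the cokernel is a $\mathbb{Z}$-graded vector space, and $(c_{n}-1)m=0$ forces $m=0$ by comparing lowest homogeneous components in $m=c_{n}m$; this yields surjectivity of $C(c_{1})\to C(\overline{c}_{1})$, which the paper instead obtains through the explicit homogeneous lifts above. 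One inaccuracy in your closing diagnosis: the detour through $\mathcal{O}(GL_{n})$ is not actually forced. Your lowest-term argument never uses invertibility (or even regularity) of $c_{n}$ on the cokernel, only that multiplication by $c_{n}$ shifts the grading by $n\neq0$; the identical snake-lemma argument applied to $0\to\mathcal{O}(M_{n})\xrightarrow{\det-1}\mathcal{O}(M_{n})\to\mathcal{O}(SL_{n})\to0$ proves $(1)\Rightarrow(3)$ directly, since in the $\mathbb{N}$-graded cokernel the relation $m=c_{n}m$ kills $m$ by induction upward from degree $0$. This does not affect correctness --- your cycle $(1)\Rightarrow(2)\Rightarrow(3)\Rightarrow(1)$ closes --- but the claimed decisive role of $\mathcal{O}(GL_{n})$ is a convenience, not a necessity.
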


\begin{proof}
The first and second statements are equivalent as $\det$ is a Poisson-central
element, so we have $\{c_{1},h\cdot\mathrm{det}^{k}\}=\{c_{1},h\}\cdot\mathrm{det}^{k}$
for any $h\in\mathcal{O}(GL_{n})$ and $k\in\mathbb{Z}$. Hence, 
\[
\mathcal{O}(GL_{n})\supseteq C(c_{1})=\big(\mathcal{O}(M_{n})\cap C(c_{1})\big)[\mathrm{det}^{-1}]
\]
proving $1)\iff2)$.

$1)\iff3)$: First, assume $1)$ and let $\overline{h}\in\mathcal{O}(SL_{n})$
such that $\{\overline{c}_{1},\overline{h}\}=0$. Since $\mathcal{O}(SL_{n})$
is $\mathbb{Z}/n\mathbb{Z}$-graded (inherited from the $\mathbb{N}$-grading
of $\mathcal{O}(M_{n})$) and $\overline{c}_{1}$ is homogeneous with
respect to this grading, its Poisson-centralizer is generated by $\mathbb{Z}/n\mathbb{Z}$-homogeneous
elements, so we may assume that $\overline{h}$ is $\mathbb{Z}/n\mathbb{Z}$-homogeneous. 

Let $k=\deg(\overline{h})\in\mathbb{Z}/n\mathbb{Z}$. Let $h\in\mathcal{O}(M_{n})$
be a lift of $\overline{h}\in\mathcal{O}(SL_{n})$ and consider the 
$\mathbb{N}$-homogeneous decomposition $h=\sum_{j=0}^{d}h_{jn+k}$ of $h$, where $h_{jn+k}$
is homogeneous of degree $jn+k$ for all $j\in\mathbb{N}$. Define
\[
h':=\sum_{j=0}^{d}h_{jn+k}\mathrm{det}^{d-j}\in\mathcal{O}(M_{n})_{dn+k}
\]
that is a homogeneous element of degree $dn+k$ representing $\overline{h}\in\mathcal{O}(SL_{n})$
in $\mathcal{O}(M_{n})$. Then $\{c_{1},h'\}\in(\det-1)\cap\mathcal{O}(M_{n})_{dn+k+1}$
since $\{\overline{c}_{1},\overline{h'}\}=\{\overline{c}_{1},\overline{h}\}=0$,
$c_{1}$ is homogeneous of degree $1$ and the Poisson-structure is
graded. Clearly, $(\det-1)\cap\mathcal{O}(M_{n})_{dn+k+1}=0$ hence
$\{c_{1},h'\}=0$. Applying $1)$ gives $h'\in\mathbb{C}[c_{1},\dots,c_{n}]$
so $\overline{h}\in\mathbb{C}[\overline{c}_{1},\dots,\overline{c}_{n-1}]$
as we claimed.

Conversely, assume $3)$ and let $h\in\mathcal{O}(M_{n})$ such that
$\{c_{1},h\}=0$. Since $c_{1}$ is $\mathbb{N}$-homogeneous, we
may assume that $h$ is also $\mathbb{N}$-homogeneous and so the
image $\overline{h}\in\mathcal{O}(SL_{n})$ of $h$ is $\mathbb{Z}/n\mathbb{Z}$-homogeneous.
By the assumption, $\overline{h}=p(\overline{c}_{1},\dots,\overline{c}_{n-1})$
for some $p\in\mathbb{C}[t_{1},\dots,t_{n-1}]$. Endow $\mathbb{C}[t_{1},\dots,t_{n}]$
with the $\mathbb{N}$-grading $\mathrm{deg}(t_{i})=i$. As $\overline{h}$
is $\mathbb{Z}/n\mathbb{Z}$-homogeneous, we may choose $p\in\mathbb{C}[t_{1},\dots,t_{n-1}]$
so that its homogeneous components are all of degree $dn+\mathrm{deg}(\overline{h})\in\mathbb{N}$
with respect to the above grading for some $d\in\mathbb{N}$.

By $h-p(c_{1},\dots,c_{n-1})\in(\det-1)$ and the assumptions on degrees,
we may choose a polynomial $q\in\mathbb{C}[t_{1},\dots,t_{n}]$ that
is homogeneous with respect to the above grading and $q(t_{1},\dots,t_{n-1},1)=p$.
Let $h':=h\cdot\mathrm{det}^{r}$ where $r:=\frac{1}{n}(\deg q-\deg h)\in\mathbb{Z}$
so $\deg(h')=\deg(q)\in\mathbb{N}$. Then 
\[
h'-q(c_{1},\dots,c_{n})\in(\det-1)\cap\mathcal{O}(M_{n})_{\mathrm{deg}q}=0
\]
hence $h'\in\mathbb{C}[c_{1},\dots,c_{n}]$ and $h\in\mathbb{C}[c_{1},\dots,c_{n},c_{n}^{-1}]$.
This is enough as $\mathbb{C}[c_{1},\dots,c_{n},c_{n}^{-1}]\cap\mathcal{O}(M_{n})=\mathbb{C}[c_{1},\dots,c_{n}]$
by the definitions.
\end{proof}

\section{Case of $\mathcal{O}(SL_{2})$\label{sec:Case-of n=00003D2}}

In this section, we prove Theorem \ref{thm:The-centralizer-of-the-trace}
for $\mathcal{O}(SL_{2})$ that is the first step of the induction
in the proof of the general case.

We denote by $a,b,c,d$ the generators $\overline{x}_{1,1},\overline{x}_{1,2},\overline{x}_{2,1},\overline{x}_{2,2}\in\mathcal{O}(SL_{2})$
and $\mathrm{tr}:=\overline{c}_{1}=a+d$. 
\begin{prop}
The centralizer of $\mathrm{tr}\in\mathcal{O}(SL_{2})$ is $\mathbb{C}[\mathrm{tr}]$.\label{prop:n=00003D2}
\end{prop}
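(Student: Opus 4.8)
The plan is to render the Hamiltonian derivation $X:=\{\mathrm{tr},\,\cdot\,\}$ completely explicit and to compute its kernel by hand. Writing $a,b,c,d$ for the generators and using the Poisson relations listed above, the only nonzero brackets among the generators are $\{a,b\}=ab$, $\{a,c\}=ac$, $\{a,d\}=2bc$, $\{b,d\}=bd$ and $\{c,d\}=cd$. I would change coordinates to $s:=a+d=\mathrm{tr}$ and $\delta:=a-d$, keeping $b,c$, so that $\mathcal{O}(SL_{2})\cong\mathbb{C}[s,\delta,b,c]/(\delta^{2}+4bc-s^{2}+4)$, the relation being $\det-1=0$ rewritten. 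A direct computation then gives
\[
X(s)=0,\qquad X(\delta)=-4bc,\qquad X(b)=b\delta,\qquad X(c)=c\delta,
\]
and one checks that $X$ annihilates the defining relation, so that $X$ descends to the quotient.

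Next I would exploit the $\mathbb{Z}$-grading on $\mathcal{O}(SL_{2})$ determined by $\deg b=1$, $\deg c=-1$ and $\deg s=\deg\delta=0$. The defining relation is homogeneous of degree $0$, so this grading is well defined, and the formulas above show that $X$ sends each generator to an element of the same degree; hence $X$ is homogeneous of degree $0$ and $\ker X$ is a graded subalgebra. It therefore suffices to find the homogeneous kernel elements. The key structural point is that, after using the relation to rewrite $bc=\tfrac{1}{4}(s^{2}-4-\delta^{2})$ inside a PBW-type basis $\{s^{i}\delta^{\varepsilon}b^{k}c^{l}\}$ with $\varepsilon\in\{0,1\}$, every homogeneous element of degree $m\geq0$ is uniquely of the form $b^{m}P(s,\delta)$ with $P\in\mathbb{C}[s,\delta]$, and symmetrically $c^{-m}P(s,\delta)$ for $m\leq0$. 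Uniqueness uses that $\mathcal{O}(SL_{2})$ is a domain and that $s,\delta$ are algebraically independent, so that $\mathbb{C}[s,\delta]$ embeds.

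Finally I would apply $X$ to such an element. Since $X(s)=0$ and $X(\delta)=-4bc=\delta^{2}-s^{2}+4$, the Leibniz rule gives
\[
X\big(b^{m}P(s,\delta)\big)=b^{m}\big(m\delta\,P+(\delta^{2}-s^{2}+4)\,\partial_{\delta}P\big),
\]
so that $b^{m}P\in\ker X$ forces $m\delta P+(\delta^{2}-s^{2}+4)\,\partial_{\delta}P=0$ in $\mathbb{C}[s,\delta]$. Viewing this as a polynomial identity in $\delta$ over $\mathbb{C}(s)$ and comparing the coefficient of the top power $\delta^{d+1}$, where $d=\deg_{\delta}P$, the leading coefficient of $P$ gets multiplied by $m+d$; since $m+d>0$ whenever $m\geq1$, this forces $P=0$. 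For $m=0$ the equation reduces to $(\delta^{2}-s^{2}+4)\,\partial_{\delta}P=0$, whence $\partial_{\delta}P=0$ and $P\in\mathbb{C}[s]$. The same argument handles $m\leq0$ via the $c^{-m}$ representation. Thus the only homogeneous kernel elements are polynomials in $s$, giving $\ker X=\mathbb{C}[\mathrm{tr}]$.

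The main obstacle is the structural lemma identifying the degree-$m$ components with $b^{m}\mathbb{C}[s,\delta]$ (resp. $c^{-m}\mathbb{C}[s,\delta]$); establishing it cleanly requires a PBW basis for $\mathcal{O}(SL_{2})$ together with the algebraic independence of $s,\delta$. Once that is in place, the degree bookkeeping and the final leading-coefficient comparison are routine, so I expect the lemma, rather than the computation of $X$ itself, to be where the real content lies.
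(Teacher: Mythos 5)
Your proof is correct, and it takes a genuinely different route from the paper's. The paper works in the monomial basis $a^{i}b^{k}c^{l}$, $b^{k}c^{l}d^{j}$, $b^{k}c^{l}$ supplied by $ad-bc=1$ and runs a two-stage elimination: filtering by the $a$-degree, it shows the top coefficient $r_{\alpha}\in\mathbb{C}[b,c]$ must itself Poisson-commute with $\mathrm{tr}$, hence is constant, subtracts $r_{\alpha}\mathrm{tr}^{\alpha}$ to push $\alpha$ down to zero, and then derives a contradiction from the lowest power of $d$. You instead diagonalize the derivation $X=\{\mathrm{tr},\cdot\,\}$: the weight grading $\deg b=1$, $\deg c=-1$, $\deg a=\deg d=0$ (for which the quadratic bracket is indeed graded and $\mathrm{tr}$ has weight zero, so $\ker X$ is a graded subalgebra), combined with the coordinates $s=a+d$, $\delta=a-d$, identifies the weight-$m$ space with $b^{m}\mathbb{C}[s,\delta]$ for $m\geq0$ (resp. $c^{-m}\mathbb{C}[s,\delta]$ for $m\leq0$), on which $X$ acts as $P\mapsto m\delta P+(\delta^{2}-s^{2}+4)\partial_{\delta}P$; this is killed only by $P=0$ when $m\neq0$ and by $P\in\mathbb{C}[s]$ when $m=0$. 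I checked the computations: the relation becomes $\delta^{2}+4bc-s^{2}+4=0$; one has $X(s)=0$, $X(\delta)=-2\{a,d\}=-4bc=\delta^{2}-s^{2}+4$, $X(b)=b\delta$, $X(c)=c\delta$; your structural lemma holds because the relation is monic of degree two in $\delta$ over $\mathbb{C}[s,b,c]$, yielding the basis $s^{i}\delta^{\varepsilon}b^{k}c^{l}$ with $\varepsilon\in\{0,1\}$, and uniqueness of $P$ follows since $\mathcal{O}(SL_{2})$ is a domain and the map $g\mapsto\big(s(g),\delta(g)\big)$ is onto $\mathbb{C}^{2}$, so $s,\delta$ are algebraically independent; and the coefficient of $\delta^{d+1}$ in your equation is indeed $(m+d)p_{d}$, where $p_{d}$ is the leading $\delta$-coefficient. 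Two cosmetic points: $X$ descends to $\mathcal{O}(SL_{2})$ automatically because $(\det-1)$ is a Poisson ideal, so your verification on the relation is only a consistency check; and ``forces $P=0$'' deserves the one-line induction on $\deg_{\delta}P$ after the leading coefficient vanishes. As for what each approach buys: yours is a one-shot, weightwise solution of the linear equation $X(f)=0$ that avoids the paper's iterative subtraction of powers of $\mathrm{tr}$ and makes transparent why the answer is exactly $\mathbb{C}[\mathrm{tr}]$ (in weight zero the equation degenerates to $\partial_{\delta}P=0$), while the paper's filtration-by-one-variable argument, though clumsier in this small case, rehearses precisely the mechanism \textendash{} a degree filtration in a distinguished variable and the associated graded bracket \textendash{} that drives the induction for general $n$ in Section \ref{sec:Proof}.
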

By $ad-bc=1$ we have a monomial basis of $\mathcal{O}(SL_{2})$ consisting
of
\[
a^{i}b^{k}c^{l},\ b^{k}c^{l}d^{j},\ b^{k}c^{l}\qquad(i,j\in\mathbb{N}^{+},\,k,l\in\mathbb{N})
\]
The Poisson bracket on the generators is the following:
\[
\{a,b\}=ab\qquad\{a,c\}=ac\qquad\{a,d\}=2bc
\]
\[
\{b,c\}=0\ \qquad\{b,d\}=bd\qquad\{c,d\}=cd
\]
The action of $\{\mathrm{tr},.\}$ on the basis elements can be written
as 
\[
\big\{(a+d),a^{i}b^{k}c^{l}\big\}=
\]
\[
=(k+l)a^{i+1}b^{k}c^{l}-2ia^{i-1}b^{k+1}c^{l+1}-(k+l)a^{i}b^{k}c^{l}d
\]
\[
=(k+l)a^{i+1}b^{k}c^{l}-(2i+k+l)a^{i-1}b^{k+1}c^{l+1}-(k+l)a^{i-1}b^{k}c^{l}
\]
By the same computation on $b^{k}c^{l}$ and $b^{k}c^{l}d^{j}$ one
obtains
\[
\big\{(a+d),b^{k}c^{l}\big\}=(k+l)ab^{k}c^{l}-(k+l)b^{k}c^{l}d
\]
\[
\big\{(a+d),b^{k}c^{l}d^{j}\big\}=(k+l+2j)b^{k+1}c^{l+1}d^{j-1}+(k+l)b^{k}c^{l}d^{j-1}-(k+l)b^{k}c^{l}d^{j+1}
\]
\smallskip{}
Hence, for a polynomial $p\in\mathbb{C}[t_{1},t_{2}]$ and $i\geq1$:
\begin{eqnarray}
\big\{(a+d),a^{i}p(b,c)\big\} & = & a^{i+1}\sum_{m}m\cdot p_{m}(b,c)\label{eq:commutator n=00003D2 a*p(b,c)}\\
 &  & -a^{i-1}\sum_{m}\big((2i+m)bc+m\big)p_{m}(b,c)\nonumber 
\end{eqnarray}
where $p_{m}$ is the $m$-th homogeneous component of $p$. The analogous
computations for $p(b,c)d^{j}$ ($j\geq1$) and $p(b,c)$ give 
\begin{eqnarray}
\big\{(a+d),p(b,c)d^{j}\big\} & = & -d^{j+1}\sum_{m}m\cdot p_{m}(b,c)\label{eq:commutator n=00003D2 p(b,c)*d}\\
 &  & +d^{j-1}\sum_{m}\big((m+2j)bc+m\big)p_{m}(b,c)\nonumber 
\end{eqnarray}
\begin{equation}
\big\{(a+d),p(b,c)\big\}=(a-d)\sum_{m}m\cdot p_{m}(b,c)\label{eq:commutator n=00003D2 p(b,c)}
\end{equation}

\begin{proof}[Proof of Proposition \ref{prop:n=00003D2}]
 Assume that $0\neq g\in C(\mathrm{tr})$ and write it as 
\[
g=\sum_{i=1}^{\alpha}a^{i}r_{i}+\sum_{j=1}^{\beta}s_{j}d^{j}+u
\]
where $r_{i}$, $s_{j}$ and $u$ are elements of $\mathbb{C}[b,c]$,
and $\alpha$ and $\beta$ are the highest powers of $a$ and $d$
appearing in the decomposition. 

We prove that $r_{\alpha}\in\mathbb{C}\cdot1$. If $\alpha=0$ then
$r_{\alpha}=u$ so the $a^{i}b^{k}c^{l}$ terms ($i>0$) in $\{a+d,g\}$
are the same as the $a^{i}b^{k}c^{l}$ terms in $\{a+d,u\}$ by Eq.
\ref{eq:commutator n=00003D2 a*p(b,c)}, \ref{eq:commutator n=00003D2 p(b,c)*d}
and \ref{eq:commutator n=00003D2 p(b,c)}. However, by \ref{eq:commutator n=00003D2 p(b,c)},
these terms are nonzero if $u\notin\mathbb{C}$ and that is a contradiction.
Assume that $\alpha\geq1$ and for a fixed $k\in\mathbb{N}$ define
the subspace 
\[
\mathcal{A}^{k}:=\sum_{l\leq k}a^{l}\mathbb{C}[b,c,d]\subseteq\mathcal{O}(SL_{2})
\]
By $\big\{\mathrm{tr},\mathcal{A}^{\alpha-1}\big\}\subseteq\mathcal{A}^{\alpha}$
we have 
\[
\mathcal{A}^{\alpha}=\{\mathrm{tr},g\}+\mathcal{A}^{\alpha}=\big\{\mathrm{tr},a^{\alpha}r_{\alpha}+\mathcal{A}^{\alpha-1}\big\}+\mathcal{A}^{\alpha}=
\]
\[
=a^{\alpha}\{\mathrm{tr},r_{\alpha}\}+\alpha a^{\alpha-1}bcr_{\alpha}+\mathcal{A}^{\alpha}=a^{\alpha}\{\mathrm{tr},r_{\alpha}\}+\mathcal{A}^{\alpha}
\]
By Eq. \ref{eq:commutator n=00003D2 p(b,c)} it is possible only if $\{\mathrm{tr},r_{\alpha}\}=0$
so $r_{\alpha}\in\mathbb{C}[b,c]\cap C(\mathrm{tr})=\mathbb{C}\cdot1$.

If $\alpha>0$ we may simplify $g$ by subtracting polynomials of
$\mathrm{tr}$ from it. Indeed, by $r_{\alpha}\in\mathbb{C}^{\times}$
we have $g-r_{\alpha}\mathrm{tr}^{\alpha}\in\mathcal{A}^{\alpha-1}\cap C(\mathrm{tr})$
so we can replace $g$ by $g-r_{\alpha}\mathrm{tr}^{\alpha}$. Hence,
we may assume that $\alpha=0$. Then, again, $r_{\alpha}=u\in\mathbb{C}\cdot1\subseteq C(\mathrm{tr})$
so we may also assume that $u=0$. 

If $g$ is nonzero after the simplification, we get a contradiction.
Indeed, let $p(b,c)d^{\gamma}$ be the summand of $g$ with the smallest
$\gamma\in\mathbb{N}$. By the above simplifications, $\gamma\geq1$.
Then the coefficient of $d^{\gamma-1}$ in $\{\mathrm{tr},g\}$ is
the same as the coefficient of $d^{\gamma-1}$ in 
\[
\{\mathrm{tr},p(b,c)d^{\gamma}\}=\{\mathrm{tr},p(b,c)\}d^{\gamma}+2\gamma bcp(b,c)d^{\gamma-1}
\]
so it is $2\gamma bcp(b,c)d^{\gamma-1}$ that is nonzero if $p(b,c)\neq0$
and $\gamma\geq1$. That is a contradiction.
\end{proof}

\section{Proof of the main result\label{sec:Proof}}

Let $n\geq2$ and let us denote $A_{n}:=\mathcal{O}(M_{n})$.
\begin{prop}
$\mathbb{C}[\sigma_{1},\dots,\sigma_{n}]\leq A_{n}$ is a Poisson-commutative
subalgebra. \label{prop:commutation}
\end{prop}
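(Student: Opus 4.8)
The plan is to exploit the fact that the semiclassical limit bracket is, by construction, computed inside the quantized algebra $R:=\mathcal{O}_{t}(M_{n})$. By the definition recalled in Section \ref{subsec:Semi-classical-limits}, for any lifts $a,b\in R$ of elements $\bar{a},\bar{b}\in R/(t-1)R\cong\mathcal{O}(M_{n})$ one has $\{\bar{a},\bar{b}\}=\frac{1}{t-1}(ab-ba)+(t-1)R$. Consequently, if one can exhibit lifts of $\sigma_{i}$ and $\sigma_{j}$ that \emph{commute} in $R$, then $\{\sigma_{i},\sigma_{j}\}=0$ is immediate. Thus the whole statement reduces to producing commuting lifts, and the explicit Leibniz computation in $\mathcal{O}(M_{n})$ can be avoided entirely.

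The natural candidates are the sums of principal quantum minors. Writing $[I\,|\,I]_{t}\in R$ for the quantum minor obtained by replacing $\mathrm{sgn}(s)$ with $(-t)^{\ell(s)}$ in the definition of $[I\,|\,I]$, I would set $\tilde{\sigma}_{i}:=\sum_{|I|=i}[I\,|\,I]_{t}$. Specializing $t\mapsto1$ turns each quantum minor into the corresponding ordinary minor, so $\tilde{\sigma}_{i}$ is a lift of $\sigma_{i}=c_{i}=\sum_{|I|=i}[I\,|\,I]$ along $R\to R/(t-1)R$. The key input is then the commutation relation $\tilde{\sigma}_{i}\tilde{\sigma}_{j}=\tilde{\sigma}_{j}\tilde{\sigma}_{i}$, i.e.\ the statement that the sums of principal quantum minors pairwise commute; this is precisely what is furnished by \cite{DL1}, or equivalently \cite{DL2}.

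Finally I would address the mismatch that \cite{DL1} and \cite{DL2} phrase commutativity in the fiber $\mathcal{O}_{q}(M_{n})=R/(t-q)$ for generic $q$, whereas the bracket is evaluated in $R$ itself. Since $R$ is free as a $\mathbb{C}[t,t^{-1}]$-module, the commutator $[\tilde{\sigma}_{i},\tilde{\sigma}_{j}]$ has coordinates that are Laurent polynomials in $t$ with respect to a PBW-type basis of $R$; these vanish under every generic specialization $t\mapsto q$, hence at infinitely many points, and therefore vanish identically. Thus $[\tilde{\sigma}_{i},\tilde{\sigma}_{j}]=0$ in $R$, which yields $\{\sigma_{i},\sigma_{j}\}=0$ for all $1\leq i,j\leq n$ and shows that $\mathbb{C}[\sigma_{1},\dots,\sigma_{n}]$ is Poisson-commutative. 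The only genuinely nontrivial ingredient is the quantum commutation relation, which I invoke from the literature; everything else is formal, so the main care required is in matching the specialization $t\mapsto1$ with the classical minors and in transferring commutativity from the fibers $\mathcal{O}_{q}(M_{n})$ to the generic algebra $R$.
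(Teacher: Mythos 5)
Your proposal is correct and takes essentially the same approach as the paper: both lift $c_{i}$ to the sums of principal quantum minors in $R=\mathcal{O}_{t}(M_{n})$, invoke the commutativity theorem of \cite{DL1} in the fibers $\mathcal{O}_{q}(M_{n})$, and then conclude $\{c_{i},c_{j}\}=0$ directly from the definition of the semiclassical limit bracket. The only (immaterial) difference is how commutativity is transferred from the fibers to $R$ itself: you note that the PBW coordinates of $[\tilde{\sigma}_{i},\tilde{\sigma}_{j}]$ are Laurent polynomials in $t$ vanishing at infinitely many specializations, while the paper specializes at a single transcendental $q$ and extends scalars from $\mathbb{Z}[q,q^{-1}]\cong\mathbb{Z}[t,t^{-1}]$ to $\mathbb{C}$ --- both are standard and valid.
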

\begin{proof}
Consider the principal quantum minor sums 
\[
\sigma_{i}=\sum_{|I|=i}\sum_{s\in S_{i}}t^{-\ell(s)}x_{i_{1},i_{s(1)}}\dots x_{i_{t},i_{s(t)}}\in\mathcal{O}_{t}(M_{n})
\]
When $A_{n}$ is viewed as the semiclassical limit $R/(t-1)R$ where
$R=\mathcal{O}_{t}(M_{n})$ (see Subsection \ref{subsec:Quantized-coordinate-rings}),
one can see that $\sigma_{i}$ represents $c_{i}\in R/(t-1)R\cong\mathcal{O}(M_{n})$.
In \cite{DL1}, it is proved that $\sigma_{i}\sigma_{j}=\sigma_{j}\sigma_{i}$
in $\mathcal{O}_{q}(M_{n})$ if $q$ is not a root of unity, in particular,
if $q$ is transcendental. 

Since the algebra $\mathcal{O}_{q}(M_{n})$ is defined over $\mathbb{Z}[q,q^{-1}]$,
the elements $\sigma_{1},\dots,\sigma_{n}$ (that are defined over
$\mathbb{Z}[q,q^{-1}]$) commute in $\mathcal{O}_{q}\big(M_{n}(\mathbb{Z})\big)\leq\mathcal{O}_{q}\big(M_{n}(\mathbb{C})\big)$
as well. Hence, $\sigma_{1},\dots,\sigma_{n}$ also commute after
extension of scalars, i.e. in the ring $\mathcal{O}_{q}\big(M_{n}(\mathbb{Z})\big)\otimes_{\mathbb{Z}}\mathbb{C}\cong\mathcal{O}_{t}\big(M_{n}(\mathbb{C})\big)$.
Consequently, in $A_{n}\cong R/(t-1)R$ the subalgebra $\mathbb{C}[c_{1},\dots,c_{n}]$
is a Poisson-commutative subalgebra, by the definition of semiclassical
limit.
\end{proof}
By Proposition \ref{prop:commutation}, $\mathbb{C}[c_{1},\dots,c_{n}]$
is in the Poisson-centralizer $C(c_{1})$. To prove the converse,
Theorem \ref{thm:The-centralizer-of-the-trace}, we need further notations.
Consider the Poisson ideal
\[
I:=(x_{1,j},\,x_{i,1}\ |\ 2\leq i,j\leq n)\lhd A_{n}
\]
We will denote its quotient Poisson algebra by $B_{2,n}:=A_{n}/I$
and the natural surjection by $\varphi:A_{n}\to B_{2,n}$. Note that
$B_{2,n}\cong A_{n-1}[t]$ as Poisson algebras by $x_{i,j}+I\mapsto x_{i-1,j-1}$
($2\leq i,j\leq n$) and $x_{1,1}\mapsto t$ where the bracket of
$A_{n-1}[t]$ is the trivial extension of the bracket of $A_{n-1}$
by $\{t,a\}=0$ for all $a\in A_{n-1}[t]$. 

Furthermore, $D_{n}$ will stand for $\mathbb{C}[t_{1},\dots,t_{n}]$
endowed with the zero Poisson bracket. Define the map $\delta:B_{2,n}\to D_{n}$
as $x_{i,j}+I\mapsto\delta_{i,j}t_{i}$ that is morphism of Poisson
algebras by $\{x_{i,i},x_{j,j}\}\in I$. Note that $(\delta\circ\varphi)(c_{i})=s_{i}$,
the elementary symmetric polynomial in $t_{1},\dots,t_{n}$. In particular,
$\delta\circ\varphi$ restricted to $\mathbb{C}[c_{1},\dots,c_{n}]$
is an isomorphism onto the symmetric polynomials in $t_{1},\dots,t_{n}$
by the fundamental theorem of symmetric polynomials. In the proof
of Theorem \ref{thm:The-centralizer-of-the-trace} we verify the same
property for $C(\sigma_{1})$.

Although the algebras $A_{n}$, $B_{2,n}$ and $D_{n}$ are $\mathbb{N}$-graded
Poisson algebras (see Section \ref{sec:Prerequisites}) using the
total degree of $A_{n}$ and the induced gradings on the quotients,
we will instead consider them as filtered Poisson algebras where the
filtration is not the one that corresponds to this grading. For each
$d\in\mathbb{N}$, let us define 
\[
\mathcal{A}^{d}=\{a\in A_{n}\ |\ \deg_{x_{1,1}}(a)\leq d\}
\]
This is indeed a filtration on $A_{n}$. Note, that the grading $\deg_{x_{1,1}}$
is incompatible with the bracket by $\{x_{1,1},x_{2,2}\}=x_{1,2}x_{2,1}$.
The algebras $B_{2,n}$, $D_{n}$ and $C(c_{1})$ inherit a filtered
Poisson algebra structure as they are Poisson sub- and quotient algebras
of $A_{n}$ so we may take $\mathcal{B}^{d}:=\varphi(\mathcal{A}^{d})$,
$\mathcal{D}^{d}:=(\delta\circ\varphi)(\mathcal{A}^{d})$ and $\mathcal{C}^{d}=\mathcal{A}^{d}\cap C(c_{1})$.
This way, the natural surjections $\varphi$ and $\delta$ and the
embedding $C(c_{1})\hookrightarrow A_{n}$ are maps of filtered Poisson
algebras. 

In the proof of Theorem \ref{thm:The-centralizer-of-the-trace} we
use the associated graded Poisson algebras of $B_{2,n}$, $D_{n}$
and $C(c_{1})$ (see Section \ref{sec:Prerequisites}). First, we
describe the structure of these. The filtrations on $B_{2,n}$ and
$D_{n}$ are induced by the $x_{1,1}$- and $t_{1}$-degrees, hence
we have $\mathrm{gr}B_{2,n}\cong B_{2,n}$ and $\mathrm{gr}D_{n}\cong D_{n}$
as graded Poisson algebras (and $\mathrm{gr}\delta=\delta$), so we
identify them in the following. 

The underlying graded algebra of $\mathrm{gr}A_{n}$ is isomorphic
to $A_{n}$ using the $x_{1,1}$-degree but the Poisson bracket is
different: it is the same on the generators $x_{i,j}$ and $x_{k,l}$
for $(i,j)\neq(1,1)\neq(k,l)$ but 
\begin{eqnarray*}
\{x_{1,1},x_{i,j}\}_{\mathrm{gr}} & = & 0\qquad\quad(2\leq i,j\leq n)\\
\{x_{1,1},x_{1,j}\}_{\mathrm{gr}} & = & x_{1,1}x_{1,j}\quad(2\leq j\leq n)\\
\{x_{1,1},x_{i,1}\}_{\mathrm{gr}} & = & x_{1,1}x_{i,1}\quad(2\leq i\leq n)
\end{eqnarray*}
where $\{.,.\}_{\mathrm{gr}}$ stands for the Poisson bracket of $\mathrm{gr}A_{n}$.
Consequently, as maps we have $\mathrm{gr}\varphi=\varphi$, we still
have $\{c_{i},c_{j}\}_{\mathrm{gr}}=0$ for all $i,j$, and the underlying
algebra of $\mathrm{gr}C(c_{1})$ can be identified with $C(c_{1})$. 

Note, that $C(c_{1})$ is defined by the original Poisson structure
$\{.,.\}$ of $A_{n}$ and not by $\{.,.\}_{\mathrm{gr}}$, even if
it will be considered as a Poisson subalgebra of $\mathrm{gr}A_{n}$.
The reason of this slightly ambiguous notation is that we will also
introduce $C^{\mathrm{gr}}(x_{1,1})\subseteq\mathrm{gr}A_{n}$ as
the centralizer of $x_{1,1}$ with respect to $\{.,.\}_{\mathrm{gr}}$. 

Our associated graded setup can be summarized as follows:
\[
\xymatrix{C(c_{1})\ar@{}[r]|\subseteq & \mathrm{gr}A_{n}\ar@{->>}[r]^{\varphi} & B_{2,n}\ar@{->>}[r]^{\delta} & D_{n}}
\]

\begin{proof}[Proof of Theorem \ref{thm:The-centralizer-of-the-trace}]
 We prove the statement by induction on $n$. The statement is verified
for $\mathcal{O}(SL_{2})$ in Section \ref{sec:Case-of n=00003D2}
so, by Proposition \ref{prop:implications} the case $n=2$ is proved.
Assume that $n\geq3$. We shall prove that

\begin{itemize}
\item $(\delta\circ\varphi)|_{C(c_{1})}:C(c_{1})\to D_{n}$ is injective,
and
\item the image $(\delta\circ\varphi)\big(C(c_{1})\big)$ is in $D_{n}^{S_{n}}$.
\end{itemize}
These imply that the restriction of $\delta\circ\varphi$ to $C(c_{1})$
is an isomorphism onto $D_{n}^{S_{n}}$ since $C(c_{1})\ni c_{i}$
for $i=1,\dots,n$ (see Section \ref{sec:Prerequisites}) and $\delta\circ\varphi$
restricted to $\mathbb{C}[c_{1},\dots,c_{n}]$ is surjective onto
$D_{n}^{S_{n}}$. The statement of the theorem follows.

To prove that $\delta\circ\varphi$ is injective on $C(c_{1})$ it
is enough to prove that $\delta$ is injective on $C\big(\varphi(c_{1})\big)$
and that $\varphi$ is injective on $C(c_{1})$. Indeed, as $\varphi$
is a Poisson map we have $\varphi\big(C(c_{1})\big)\subseteq C\big(\varphi(c_{1})\big)$.

First, we prove $\delta$ is injective on $C\big(\varphi(c_{1})\big)$.
By $B_{2,n}\cong A_{n-1}[t]$ where $t$ is Poisson-central, we have
\[
B_{2,n}\supseteq C\big(\varphi(c_{1})\big)\cong C_{A_{n-1}[t]}\big(t+c_{1}(A_{n-1})\big)=C_{A_{n-1}}\big(c_{1}(A_{n-1})\big)[t]\subseteq A_{n-1}[t]
\]
By the induction hypothesis
\[
C_{A_{n-1}}\big(c_{1}(A_{n-1})\big)=\mathbb{C}\big[c_{1}(A_{n-1}),\dots,c_{n-1}(A_{n-1})\big]
\]
Therefore, $\delta$ restricted to $C\big(\varphi(c_{1})\big)$ is
an isomorphism onto $\mathbb{C}[s_{1},\dots,s_{n-1}][t_{1}]\subseteq D_{n}$
where $s_{i}$ is the symmetric polynomial in the variables $t_{2},\dots,t_{n}$.
In particular, $\delta$ is injective on $C\big(\varphi(c_{1})\big)$.

To verify the injectivity of $\varphi$ on $C(c_{1})$, define 
\[
C^{\mathrm{gr}}(x_{1,1}):=\{a\in\mathrm{gr}A_{n}\ |\ \{x_{1,1},a\}_{\mathrm{gr}}=0\}
\]
The subalgebra $C(c_{1})$ is contained in $C^{\mathrm{gr}}(x_{1,1})$
since for a homogeneous element $a$ of degree $d$, we have 
\[
\mathcal{A}^{d+1}/\mathcal{A}^{d}\ni\{x_{1,1},a\}_{\mathrm{gr}}+\mathcal{A}^{d}=\{x_{1,1}+\mathcal{A}^{0},a+\mathcal{A}^{d-1}\}+\mathcal{A}^{d}=\{c_{1},a\}+\mathcal{A}^{d}
\]
hence $\{c_{1},a\}=0$ implies $\{x_{1,1},a\}_{\mathrm{gr}}=0\in\mathrm{gr}A_{n}$.
Our setup can be visualized on the following diagram: 
\[
\xymatrix{\mathrm{gr}(A_{n})\ar@{->>}[r]^{\varphi} & B_{2,n}\ar@{->>}[r]^{\delta} & D_{n}\\
C^{\mathrm{gr}}(x_{1,1})\ar@{}[u]|\bigcup & C\big(\varphi(c_{1})\big)\ar@{}[u]|\bigcup\ar@{^{(}->}[ru]\\
C(c_{1})\ar@{}[u]|\bigcup\ar[ru]
}
\]
Now, it is enough to prove that $\varphi$ restricted to $C^{\mathrm{gr}}(x_{1,1})$
is injective.

We can give an explicit description of $C^{\mathrm{gr}}(x_{1,1})$
in the following form: 
\[
C^{\mathrm{gr}}(x_{1,1})=\mathbb{C}[x_{1,1},x_{i,j}\ |\ 2\leq i,j\leq n]\leq\mathrm{gr}A_{n}
\]
Indeed, 
\[
\{x_{1,1},x_{i,j}\}_{\mathrm{gr}}=\begin{cases}
x_{1,1}x_{i,j} & \textrm{if }j\neq i=1\textrm{ or }i\neq j=1\\
0 & \textrm{otherwise}
\end{cases}
\]
Therefore, the map $\mathrm{ad}_{\mathrm{gr}}x_{1,1}:\,a\mapsto\{x_{1,1},a\}_{\mathrm{gr}}$
acts on a monomial $m\in\mathrm{gr}A_{n}$ as $\{x_{1,1},m\}_{\mathrm{gr}}=c(m)\cdot x_{1,1}m$
where $c(m)$ is the sum of the exponents of the $x_{1,j}$'s and
$x_{i,1}$'s ($2\leq i,j\leq n$) in $m$. Hence, $\mathrm{ad}_{\mathrm{gr}}x_{1,1}$
maps the monomial basis of $\mathrm{gr}A_{n}$ injectively into itself.
In particular, 
\[
C^{\mathrm{gr}}(x_{1,1})=\mathrm{Ker}\big(\mathrm{ad}_{\mathrm{gr}}x_{1,1}\big)=\{a\in\mathrm{gr}A_{n}\ |\ c(m)=0\}\cong A_{n-1}[t]
\]
 using the isomorphism $x_{1,1}\mapsto t$ and $x_{i,j}\mapsto x_{i-1,j-1}$. 

The injectivity part of the theorem follows: $\varphi$ is injective
on $C^{\mathrm{gr}}(x_{1,1})$ (in fact it is an isomorphism onto
$B_{2,n}$), and $\varphi$ maps $C(c_{1})$ into $C\big(\varphi(c_{1})\big)$
on which $\delta$ is also injective.

To prove $(\delta\circ\varphi)\big(C(c_{1})\big)\subseteq D_{n}^{S_{n}}$,
first note that in the above we have proved that
\[
(\delta\circ\varphi)\big(C(c_{1})\big)\subseteq\delta\big(C\big(\varphi(c_{1})\big)\big)\subseteq D_{n}^{S_{n-1}}
\]
where $S_{n-1}$ acts on $D_{n}$ by permuting $t_{2},\dots,t_{n}$.
Consider the automorphism $\gamma$ of $A_{n}$ given by the reflection
to the off-diagonal: $\gamma(x_{i,j})=x_{n+1-i,n+1-j}$. It is not
a Poisson map but a Poisson antimap (using the terminology of \cite{ChP}),
i.e. $\gamma(\{a,b\})=-\{\gamma(a),\gamma(b)\}$. It maps $c_{1}$
into itself and consequently $C(c_{1})$ into itself. For the analogous
involution $\overline{\gamma}:D_{n}\to D_{n}$, $t_{i}\mapsto t_{n+1-i}$
($i=1,\dots,n$) we have $(\delta\circ\varphi)\circ\gamma=\overline{\gamma}\circ(\delta\circ\varphi)$.
Hence, 
\[
(\delta\circ\varphi)\big(C(c_{1})\big)=(\delta\circ\varphi\circ\gamma)\big(C(c_{1})\big)=(\overline{\gamma}\circ\delta\circ\varphi)\big(C(c_{1})\big)\subseteq\overline{\gamma}\big(D_{n}^{S_{n-1}}\big)
\]
proving the symmetry of $(\delta\circ\varphi)\big(C(c_{1})\big)$
in $t_{1},\dots,t_{n-1}$, so it is symmetric in all the variables
by $n\geq3$.
\end{proof}
\begin{rem}
In contrast with Theorem \ref{thm:The-subalgebra}, in the case of
the KKS Poisson structure, every Poisson-commutative subalgebra contains
the Poisson center $\mathbb{C}[c_{1},\dots,c_{n}]$, see \cite{W}.
For a maximal commutative subalgebra with respect to the KKS bracket,
see \cite{KW}.
\end{rem}
\begin{rem}
\label{rem:The-rank}We prove that $\mathbb{C}[\overline{c}_{1},\dots\overline{c}_{n-1}]$
is not an integrable complete involutive system (see Section \ref{sec:Prerequisites}).
First, observe that the rank of the semiclassical Poisson bracket
of $\mathcal{O}(SL_{n})$ is $n(n-1)$. 

Indeed, by Section \ref{sec:Prerequisites}, the rank is the maximal
dimension of the symplectic leaves in $SL_{n}$. The symplectic leaves
in $SL_{n}$ are classified in \cite{HL1}, Theorem A.2.1, based on
the work of Lu, Weinstein and Semenov-Tian-Shansky \cite{LW}, \cite{S}.
The dimension of a symplectic leaf is determined by an associated
element of $W\times W$ where $W=S_{n}$ is the Weyl group of $SL_{n}$.
According to Proposition A.2.2, if $(w_{+},w_{-})\in W\times W$ then
the dimension of the corresponding leaves is 
\begin{equation}
\ell(w_{+})+\ell(w_{-})+\min\{m\in\mathbb{N}\ |\ w_{+}w_{-}^{-1}=r_{1}\cdot\dots\cdot r_{m}\ |\ r_{i}\textrm{ is a transposition for all }i\}\label{eq:rank}
\end{equation}
where $\ell(.)$ is the length function of the Weyl group that \textendash{}
in the case of $SL_{n}$ \textendash{} is the number of inversions
in a permutation. By the definition of inversion using elementary
transpositions, the above quantity is bounded by
\[
\ell(w_{+})+\ell(w_{-})+\ell(w_{+}w_{-}^{-1})
\]
The maximum of the latter is $n(n-1)$ since $\ell(w_{+})={n \choose 2}-\ell(w_{+}t)$
where $t=(n\dots1)$ stands for the longest element of $S_{n}$. Therefore,
\[
\ell(w_{+})+\ell(w_{-})+\ell(w_{+}w_{-}^{-1})=n(n-1)-\ell(w_{+}t)-\ell(w_{-}t)+\ell\big((w_{+}t)(w_{-}t)^{-1}\big)\leq n(n-1)
\]
because $\ell(gh)\leq\ell(g)+\ell(h)=\ell(g)+\ell(h^{-1})$ for all
$g,h\in S_{n}$. This maximum is attained on $w_{+}=w_{-}=t$, even
for the original quantity in Equation \ref{eq:rank}. Hence, $\mathrm{Rk}\{.,.\}=n(n-1)$
for $SL_{n}$ and $\mathrm{Rk}\{.,.\}=n(n-1)+1$ for $M_{n}$ and
$GL_{n}$. However, a complete integrable system should have dimension
\[
\dim SL_{n}-\frac{1}{2}\mathrm{Rk}\{.,.\}=n^{2}-1-{n \choose 2}={n+1 \choose 2}-1
\]
So it does not equal to $\dim\mathbb{C}[\overline{c}_{1},\dots\overline{c}_{n-1}]=n-1$
if $n>1$. Similarly, the system is non-integrable for $M_{n}$ and
$GL_{n}$.
\end{rem}

\medskip{}

\noun{\small{}Department of Mathematics, Central European University, Budapest, 1051}{\small \par}

\emph{E-mail address}: \texttt{meszaros\_szabolcs@phd.ceu.edu}
\end{document}